\newcommand{\U}{\mathcal U}
\newcommand{\D}{\mathcal D}
\newcommand{\I}{\mathcal I}
\newcommand{\level}{\operatorname{\ell}}
\newtheorem{theorem}{Theorem}[section]
\newtheorem{lemma}[theorem]{Lemma}
\newtheorem{Algorithm}[theorem]{Algorithm}
\newtheorem{Theorem}[theorem]{Theorem}
\newtheorem{corollary}[theorem]{Corollary}
\newtheorem{proposition}[theorem]{Proposition}
\newtheorem{conjecture}[theorem]{Conjecture}
\theoremstyle{definition}
\newtheorem{definition}[theorem]{Definition}
\title[Proof of a conjecture of Matherne, Morales, and Selover]{Proof of a conjecture of Matherne, Morales, and Selover on encodings of unit interval orders}
\author{Félix Gélinas, Adrien Segovia, and Hugh Thomas}
\begin{document}
\maketitle
\normalem 

\begin{abstract}
    There are two bijections from unit interval orders on $n$ elements to Dyck paths from $(0,0)$ to $(n,n)$. One is to consider the pairs of incomparable elements, which form the set of boxes between some Dyck path and the diagonal. Another is to find a particular part listing (in the sense of Guay-Paquet) which yields an isomorphic poset, and to interpret the part listing as the area sequence of a Dyck path. Matherne, Morales, and Selover conjectured that, for any unit interval order, these two Dyck paths are related by Haglund's well-known zeta bijection. In this paper we prove their conjecture.  
\end{abstract}

\section{Introduction}\label{intro}

Let $\mathcal I=\{I_1,\dots,I_n\}$
be a set of $n$ intervals of unit length, numbered
  from left to right. $U(\I)$ is the poset on the elements
  1 to $n$ which is defined by $i\prec j$ if and only if interval
  $I_i$ is strictly to the left of interval $I_j$. 

The posets that can be described in this way are called \emph{unit interval orders}. We write $\U_n$ for the collection of unit interval orders on
  $\{1,\dots,n\}$.

 A \emph{Dyck path} of length $n$ is a path which
  proceeds by steps of $(1,0)$ and $(0,1)$ from $(0,0)$ to $(n,n)$, while
  not passing below the diagonal line from $(0,0)$ to $(n,n)$. Dyck paths are
  counted by the well-known Catalan numbers.  We write $\D_n$ for the collection of Dyck paths of length $n$.

  A Dyck path can be specified by identifying the collection of complete
  $1\times 1$ boxes with vertices at lattice points which lie between the Dyck path and the diagonal line. We refer to the collection of these boxes as the
  \emph{area set} of the Dyck path. 
  The boxes which may appear
  in the area set of a Dyck path in $\D_n$ can be indexed by pairs $(i,j)$ with
  $1\leq i <j \leq n$, where the pair $(i,j)$ corresponds to the box $i-1\leq x \leq i$, $j-1\leq y\leq j$. 

  The \emph{area sequence} of a Dyck path $D\in\D_n$ is the
  sequence $(a_1,\dots,a_n)$ consisting of the number
  of boxes
  in the area set on each horizontal line, from bottom to top. The possible area sequences of
  Dyck paths are characterized by the fact that $a_1=0$ and
  $a_i\leq a_{i-1}+1$. 
 
  For $U$ a unit interval order, let $a(U)$ be the Dyck path whose area set
  is given by the boxes $(i,j)$ such that $i<j$ and $i\not\prec j$.
  In Section \ref{Area}, we will recall the proof that this
  defines a bijection from $\U_n$ to $\D_n$.

  We now turn to the definition of a second map from unit interval orders
  to Dyck paths, as described in Section \ref{PL}.
  Let $w=(w_1,\dots,w_n)$ be a sequence of non-negative integers.
Associated to $w$ is a poset $P(w)$ defined as follows \cite[Section 2]{GP}.

  For $1\leq i,j\leq n$, we set $i\prec j$ if either of the following is satisfied: \begin{itemize}
  \item   $w_j -w_i\geq 2$,
  \item  or $w_j-w_i =1$ and $i<j$.
\end{itemize}

  For any $w$, the poset $P(w)$ is isomorphic to a unique unit interval order.
The word $w$ is referred to as a \emph{part listing} for $P(w)$. 
  Note that the labelling of the elements of $P(w)$ and the labelling of
  the elements of the isomorphic unit interval order may not be the same.

For a unit interval order $U\in \U_n$, there is a unique part listing $w$
such that $P(w)$ is isomorphic to $U$ and $w$ is the area sequence of a
Dyck path. Define $\tilde p(U)$ to be this part listing.
Define $p(U)$ to be the Dyck path whose area sequence is $\tilde p(U)$.

We have now recalled two ways to 
associate a Dyck path to a unit interval
order, namely the maps $a$ and $p$. Considering these two maps,
Matherne, Morales and Selover raised this question of how they are
related. (In fact, the definition of $p$ used in \cite{MMS} is slightly different from what we have taken as the definition, and seems to include a small inaccuracy. The authors of \cite{MMS} have confirmed to us that the map $p$ which we consider is the one which they intended.)

On the basis of computer evidence, Matherne, Morales, and Selover made the following conjecture:

\begin{conjecture}\cite{MMS} \label{con}
  For $U\in \U_n$, we have that 
  $a(U)=\zeta(p(U))$.\end{conjecture}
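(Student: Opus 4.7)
The plan is to prove the identity $a(U) = \zeta(p(U))$ by translating both sides into combinatorial data on the part listing $w = \tilde p(U) = (w_1,\dots,w_n)$, and then matching these descriptions via an explicit formulation of Haglund's $\zeta$.

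First I would describe $a(U)$ in terms of $w$. The isomorphism $\sigma : P(w) \to U$ to the unit interval order (whose labels come from ordering intervals left-to-right) must send element $i$ of $P(w)$ to the rank of the pair $(w_i,i)$ in lexicographic order. Via $\sigma$, a pair $(k,l)$ with $k<l$ lies in the area set of $a(U)$ if and only if $(i,j)=(\sigma^{-1}(k),\sigma^{-1}(l))$ is incomparable in $P(w)$ and $\sigma(i)<\sigma(j)$. A short case analysis shows that this amounts to either (i)~$i<j$ and $w_i=w_j$, or (ii)~$j<i$ and $w_j=w_i+1$. The unordered pair $\{\min(i,j),\max(i,j)\}$ is then precisely a dinv pair of $p(U)$ viewed as a Dyck path with area sequence $w$; so the area boxes of $a(U)$ and the dinv pairs of $p(U)$ are in natural bijection, both parameterized by the incomparable pairs of $U$. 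This already yields the statistic-level identity $\area(a(U))=\dinv(p(U))$, which is consistent with the property $\area(\zeta(D))=\dinv(D)$ of Haglund's bijection.

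To upgrade this to equality of paths, I would use a refined description of $\zeta$ as a canonical bijection between the dinv pairs of $D$ and the area boxes of $\zeta(D)$, constructed row by row from the bounce decomposition of $D$ together with the column positions of its north steps. The bounce sequence of $p(U)$ can be read off directly from $w$, and the key claim to verify is that the bijection above (induced by $U$'s incomparability relation) agrees row by row with the one prescribed by $\zeta$. Equivalently, the row in $a(U)$ containing the area box associated with an incomparable pair of $U$ must coincide with the row that $\zeta$ assigns to the corresponding dinv pair of $p(U)$.

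The hard part will be this row-by-row matching. Since $\sigma$ is a \emph{global} lexicographic sort, the $U$-labels arising from a single bounce block of $p(U)$ are not in a contiguous range, and conversely the preimage under $\sigma$ of a single row of $a(U)$ may mix elements from different bounce blocks. A natural strategy is to induct on the number of bounce blocks, peeling off the initial one (which corresponds to the initial chain $1\prec 2\prec\cdots\prec b_1$ in $P(w)$) and applying the inductive hypothesis to what remains; an alternative is to introduce an intermediate object—a labelled chain-antichain decomposition of $U$ driven by the bounce path of $p(U)$—that interfaces cleanly with both descriptions. Either approach reduces the conjecture to a concrete combinatorial identity between the bounce geometry of $p(U)$ and the incomparability geometry of $a(U)$.
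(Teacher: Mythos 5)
Your set-up is a genuinely different route from the paper's, and the first part of it is correct and worth noting: identifying area boxes of $a(U)$ with incomparable pairs of $U$, and showing that under the relabelling $\sigma$ (the paper's $f_U$) these correspond exactly to the dinv pairs of $p(U)$ (pairs $m<M$ with $w_m=w_M$ or $w_m=w_M+1$), does give $\area(a(U))=\dinv(p(U))$, consistent with $\area\circ\zeta=\dinv$ for Haglund's map.

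However, the crucial step --- upgrading equality of statistics to equality of paths --- is exactly where the proposal stops. Equality of the single number $\area$ does not force two Dyck paths to coincide; what is needed is row-by-row agreement, and you explicitly flag this as ``the hard part'' without carrying it out. Two further gaps: the ``refined description of $\zeta$ as a canonical bijection between dinv pairs of $D$ and area boxes of $\zeta(D)$'' is asserted but not constructed, and even granting such a bijection exists, you would still have to prove that it agrees with the bijection induced by the incomparability relation of $U$ --- which is essentially the whole content of the conjecture. The two suggested fallbacks (induction on the number of bounce blocks, or an intermediate chain-antichain decomposition) are plans rather than arguments, and neither is developed far enough to see that it closes.

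For comparison, the paper's proof sidesteps this global matching problem entirely by inducting on $n$: append a new rightmost interval to $U$ to get $U'$, show that $p(U')$ is $p(U)$ with one final maximal peak inserted (Lemma~\ref{add}), compute exactly where $\zeta$ then inserts a new final peak (Lemma~\ref{deb}), compute exactly where $a$ inserts a new final peak (Lemma~\ref{lemmea}), and observe that the two positions agree. This localizes the change at each step to a single added box and requires none of the bounce-block bookkeeping or dinv-to-area bijection that your approach would need.
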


The map $\zeta$ which relates $a(U)$ and $p(U)$ in the conjecture of Matherne,
Morales, and Selover is the famous zeta map of Haglund \cite{H}.
It is a bijection from $\D_n$ to $\D_n$ which plays
an important rôle in $(q,t)$-Catalan combinatorics. We recall its
definition in Section \ref{Zeta} below. This 
unexpected connection between $\zeta$ and unit interval orders seems worthy of further investigation.

The main result of this paper is to prove Conjecture \ref{con} (Theorem \ref{theoremConj}) which we carry
out in Section \ref{proof}.

Additionally, in Section \ref{grevlex}, we establish that the part listing $\tilde p(U)$ can be characterized among those part listings $w$ with
$P(w)$ isomorphic to
$U$ as the one which is minimal with respect to graded reverse lexicographic
order. This is not needed for our argument, but provides an interesting alternative description of the map $\tilde p$ (and thus also of $p$).

\section{The map $a$}\label{Area}

Let us give an equivalent definition of the unit interval orders.

\begin{lemma}
\label{defequivalent}

  An order $\prec$ on $\{1,\dots,n\}$ is a unit interval order if and only if for all $x,y\in \{1,\dots,n\}$, we have the two properties
  \begin{itemize}
      \item  $x\prec y$ implies $x<y$ ,
      \item  $x\prec y$ implies $\forall x'\leq x$ and $\forall y'\geq y$, we have $x'\prec y'$.
  \end{itemize}
\end{lemma}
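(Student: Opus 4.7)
The plan is to prove the two directions separately. The forward direction is a direct check from the definitions; the reverse direction is an inductive construction of unit intervals realizing a given $\prec$ satisfying the two bulleted properties.

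For the forward direction, write $I_i = [L_i, L_i + 1]$, so that $L_1 \leq L_2 \leq \cdots \leq L_n$ since the intervals are numbered left to right. The condition ``$I_i$ is strictly to the left of $I_j$'' amounts to $L_i + 1 < L_j$. In particular $L_i < L_j$, forcing $i < j$; this gives the first bullet. For the second bullet, if $i' \leq i$ and $j' \geq j$, then $L_{i'} + 1 \leq L_i + 1 < L_j \leq L_{j'}$, so $I_{i'}$ is strictly to the left of $I_{j'}$.

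For the reverse direction, assume $\prec$ satisfies the two bullets, and set $k_i := \min\{j : i \prec j\}$, with $k_i := n+1$ if no such $j$ exists. The first bullet yields $k_i > i$, and the second yields $k_1 \leq \cdots \leq k_n$. The plan is to construct strictly increasing reals $\ell_1 < \cdots < \ell_n$ by induction on $i$, setting $I_i := [\ell_i, \ell_i + 1]$, while maintaining the invariant that $\ell_b - \ell_a > 1$ if $a \prec b$ and $\ell_b - \ell_a < 1$ if $a \not\prec b$ (for all $a < b \leq i$). At the inductive step, the second bullet forces the set $\{j < i : j \prec i\}$ to be an initial segment $\{1, \ldots, j_*\}$, so the required constraints on $\ell_i$ reduce to placing it strictly between $\max(\ell_{j_*} + 1,\ \ell_{i-1})$ and $\ell_{j_* + 1} + 1$, with the conventions $\ell_0 := -\infty$ and $\ell_i := +\infty$ covering the extreme cases $j_* = 0$ and $j_* = i-1$.

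The only nontrivial point is non-emptiness of this window. Strict monotonicity of the previously placed $\ell$'s gives $\ell_{j_*} + 1 < \ell_{j_* + 1} + 1$, and the inductive hypothesis applied to the pair $(j_* + 1,\, i-1)$ yields $\ell_{i-1} - \ell_{j_*+1} < 1$---relying on the fact that $j_* + 1 \not\prec i-1$, which in turn follows from $j_* + 1 \not\prec i$ together with the second bullet, since otherwise $j_* + 1 \prec i-1$ would force $j_* + 1 \prec i$ by monotonicity in the upper argument. So $\ell_i$ can always be chosen inside the window, preserving the invariant. The main obstacle is purely bookkeeping---verifying this window argument cleanly and handling the degenerate edge cases $j_* \in \{0, i-1\}$---with no deeper difficulty.
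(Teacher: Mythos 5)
Your proof is correct and takes a genuinely different route from the paper's. The paper argues the reverse direction by contradiction: it assumes a minimal $n$ for which some $\prec$ satisfying the two bullets is not a unit interval order, observes that the restriction to $\{1,\dots,n-1\}$ is already realized by intervals $I_1,\dots,I_{n-1}$, and then argues that one can place $I_n$ to intersect exactly the intervals $I_{k+1},\dots,I_{n-1}$ (where $k$ is the largest index with $k\prec n$), contradicting minimality. Your proof instead builds an explicit realization by forward induction, placing left endpoints $\ell_1<\cdots<\ell_n$ one at a time and carrying the invariant $\ell_b-\ell_a>1 \Leftrightarrow a\prec b$. Both arguments hinge on the same structural fact---that $\{j<i:j\prec i\}$ is an initial segment, by the second bullet---but where the paper asserts the feasibility of the new interval's placement informally (``We can therefore choose $I_n$ so that it intersects exactly these intervals and no others''), you actually verify it by showing the placement window $\bigl(\max(\ell_{j_*}+1,\ell_{i-1}),\ \ell_{j_*+1}+1\bigr)$ is nonempty, using the invariant on the pair $(j_*+1,i-1)$ together with the observation that $j_*+1\not\prec i-1$. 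The paper's proof is shorter; yours is more constructive and makes the feasibility check explicit, which is the step the paper glosses over. One small notational nit: writing ``$\ell_i:=+\infty$'' for the degenerate upper bound is confusing since $\ell_i$ is precisely what is being constructed at that step---you presumably mean to set the upper bound $\ell_{j_*+1}$ to $+\infty$ when $j_*=i-1$.
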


\begin{proof}
  It is obvious from the definition that a unit interval order satisfies the two properties. We prove the converse direction by contradiction. 
  It is clearly true for $n=1$. Suppose that $\prec$ is an order on $\{1,\dots,n \}$, with $n\geq 2$ minimal, that satisfies the two properties but such that it is not a unit interval order. The two properties are such that when $\prec$ is restricted to $\{1,\dots,n-1\}$, it still satisfies the two properties. By minimality of $n$, the order $\prec$ restricted to $\{1,\dots,n-1\}$ is a unit interval order. Let $\{I_1,\dots,I_{n-1}\}$ be a set of intervals of unit length realizing this unit interval order. 
  
If the integer $n$ is not comparable to any other interval with respect to $\prec$, then the second property implies that there are no relations for $\prec$. The poset $\prec$ can therefore be realized as the unit interval order corresponding to $n$ intervals all of which overlap. This contradicts our assumption.

  Otherwise let $k$ be the greatest integer such that $k\prec n$. By the second property, this means that 
  the intervals $I_i$ for $k<i<n$ (if any) intersect. We can therefore choose $I_n$ so that it intersects exactly these interval and no others.
  Using the second property, the unit interval order defined by $\{I_1,\dots,I_n\}$ is precisely $\prec$, which is a contradiction and finishes the proof of the lemma.
\end{proof}

  Given $U\in \U_n$, define
  $$\tilde a(U) = \{(x,y)\mid x\not\prec y, 1\leq x<y \leq n\} $$

  \begin{lemma} \label{area} For $U\in \U_n$, we have that
    $\tilde a(U)$ is the area set of a Dyck path in $\D_n$. \end{lemma}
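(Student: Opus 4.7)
My plan is to prove that $\tilde a(U)$ is the area set of some Dyck path by verifying the standard combinatorial characterization of such sets: a subset $S \subseteq \{(i,j): 1\leq i < j \leq n\}$ is the area set of a Dyck path in $\D_n$ if and only if $S$ is closed under the two operations $(i,j) \mapsto (i+1,j)$ (when $i+1<j$) and $(i,j) \mapsto (i,j-1)$ (when $j-1>i$). The first enforces right-justification of each row against the diagonal, while the second enforces the inequality $a_j \leq a_{j-1}+1$ on the area sequence. Together they are equivalent to the conditions $a_1=0$ and $a_j\leq a_{j-1}+1$ recalled in the introduction.

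Once this characterization is in place (either cited as folklore or justified in a line or two by reading off the area sequence from $S$), both closure properties follow immediately from the second item of Lemma~\ref{defequivalent}, by contraposition. For the first: if $(i,j) \in \tilde a(U)$ and $i+1<j$, then assuming $i+1 \prec j$ and applying the second property of Lemma~\ref{defequivalent} with $x=i+1$, $y=j$, $x'=i\leq x$ and $y'=j\geq y$ yields $i\prec j$, contradicting $(i,j) \in \tilde a(U)$. For the second: if $(i,j)\in \tilde a(U)$ and $j-1>i$, then assuming $i \prec j-1$ and applying the same property with $x=i$, $y=j-1$, $x'=i$ and $y'=j$ again yields $i \prec j$, a contradiction.

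The whole argument is essentially a direct translation of the ``closure under moving the smaller element down and the larger element up'' property of unit interval orders supplied by Lemma~\ref{defequivalent} into the ``closure under moving right and moving down'' property defining an area set. I do not anticipate any genuine obstacle: the only step that needs care is stating the combinatorial characterization of area sets precisely, and the two contrapositive arguments are then one line each.
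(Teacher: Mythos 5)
Your proof is correct and follows essentially the same strategy as the paper's: both verify that $\tilde a(U)$ is closed under moving boxes toward the diagonal, which characterizes area sets. The only minor difference is that you justify the closure by invoking Lemma~\ref{defequivalent} (by contraposition, one step at a time), whereas the paper argues directly from the interval realization that overlap of $I_x$ and $I_y$ forces overlap of all intervals in between; these are two phrasings of the same underlying fact.
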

    
  \begin{proof}

    Since $(x,y)\in\tilde a(U)$ implies $x<y$, we only have boxes above the diagonal. 
    Let $(x,y)\in\tilde a(U)$. Then for any $x\leq x'<y'\leq y$, we have
    that $(x',y')\in\tilde a(U)$. Indeed, since the intervals $I_x$ and $I_y$ overlap,
    so do all the intervals indexed by numbers between $x$ and $y$. 
    We just proved that if there is a box $(x,y)$ in $\tilde a(U)$, all the boxes above the diagonal and weakly south east of $(x,y)$ are also in $\tilde a(U)$. This implies that $\tilde a(U)$ is the area set of a Dyck path.
  \end{proof}

  Thanks to Lemma \ref{area}, for $U$ a unit interval order, we can define
  $a(U)$ to be the Dyck path whose area set is given by $\tilde a(U)$.

  \begin{lemma} The map $a$ is a bijection from $\U_n$ to $\D_n$. \end{lemma}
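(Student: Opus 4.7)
The plan is to exhibit the inverse of $a$ explicitly. Given $D\in\D_n$ with area set $A(D)$, I will define a binary relation $\prec_D$ on $\{1,\dots,n\}$ by declaring $x\prec_D y$ iff $x<y$ and $(x,y)\notin A(D)$, and show that $\prec_D$ is a unit interval order whose image under $a$ is $D$. The one structural fact I will use is the closure property observed inside the proof of Lemma \ref{area}: if $(x,y)$ lies in the area set of a Dyck path and $x\leq x'<y'\leq y$, then $(x',y')$ lies in the area set as well. This is a general property of area sets of Dyck paths, immediate from the fact that such a set consists of all lattice boxes above the diagonal lying weakly north-west of the path.

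First I check that $\prec_D$ is a partial order. Irreflexivity and antisymmetry follow from $x\prec_D y\Rightarrow x<y$. For transitivity, if $x\prec_D y$ and $y\prec_D z$ then $x<y<z$; were $(x,z)\in A(D)$, the closure property applied with $x\leq y<z\leq z$ would force $(y,z)\in A(D)$, contradicting $y\prec_D z$. Then I verify the two conditions of Lemma \ref{defequivalent} for $\prec_D$: the first is tautological, and for the second, given $x\prec_D y$, $x'\leq x$, $y'\geq y$, membership $(x',y')\in A(D)$ would force $(x,y)\in A(D)$ via the closure property applied to $x'\leq x<y\leq y'$, which is impossible. Lemma \ref{defequivalent} then yields that $\prec_D$ is a unit interval order, which I name $U_D$.

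Finally, both compositions reduce to tautologies once the definitions are unwound: the area set of $a(U_D)$ is $\{(x,y)\mid x<y,\ x\not\prec_D y\}=A(D)$, so $a(U_D)=D$; and for any $U\in\U_n$, the relation $\prec_{a(U)}$ coincides with the order of $U$ because a pair $(x,y)$ with $x<y$ lies in $\tilde a(U)$ precisely when $x\not\prec y$ in $U$. I do not anticipate a substantive obstacle here: the argument is entirely bookkeeping, organised around the observation that the closure property of area sets is exactly matched to the hereditary axiom appearing in Lemma \ref{defequivalent}.
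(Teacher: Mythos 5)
Your proposal is correct and takes essentially the same approach as the paper: define the candidate inverse on a Dyck path $D$ by $x\prec_D y$ iff $x<y$ and $(x,y)\notin A(D)$, observe the closure property of area sets, and invoke Lemma \ref{defequivalent}. You add a bit more bookkeeping than the paper (an explicit transitivity check and an explicit verification of both compositions), but the substance is identical.
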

  
    \begin{proof}
    
   We give the inverse map. Let $D\in \D_n$ be a Dyck path with area set $A$. Let $\prec$ be the order on $\{1,\dots,n\}$ defined by $x\prec y$ if $x<y$ and $(x,y)\not\in A$. Proving that $\prec$ is a unit interval order would finish the proof, as the map sending $D$ to $\prec$ would be the inverse of $a$. Since $A$ is the area set of a Dyck path we know that if $(x,y)\not\in A$, then $\forall x'\leq x$ and $\forall y'\geq y$, we have that $(x',y')\not\in A$. The order $\prec$ is therefore a unit interval order by Lemma \ref{defequivalent}. \end{proof}

\section{Part listings} \label{PL}

In this section, we study the way that unit interval orders can be
defined via part listings, as already described in
Section \ref{intro}. We begin by giving an algorithm which, starting from
a unit interval order $U\in\U_n$, gives the part listing $\tilde p(U)$ defined in the introduction as the unique part listing whose associated poset is isomorphic to $U$ and such that it is the area sequence of a Dyck path.

Given $U$ a unit interval order in $\U_n$, we inductively define a function $\level$ from $\{1,\dots,n\}$ to $\mathbb Z_{\geq 0}$, as follows. We fix that
$\level(1)=0$. We suppose that $\level(i)$ has been defined for all
$1\leq i \leq j-1$. Now define $\level(j)$ to be $\max_{i\prec j} \level(i)+1$.
If $j$ is a minimal element of $U$, so that the set over which we are taking
the maximum is empty, we define $\level(j)=0$. We call $\ell(i)$ the level of $i$ (or of the interval $I_i$).

    \begin{Algorithm} \label{algo}   

      Let $U\in \U_n$. We will successively define words $q_1$, $q_2$, \dots,
      $q_n$. The word $q_i$ is of length $i$, and is obtained by inserting
      a copy of $\ell(i)$ into $q_{i-1}$.

      \begin{itemize}
          \item We begin by defining $q_1=0$. Now suppose that $q_{i-1}$ has
      already been constructed.

        \item Let $C_i$ be the number of elements of level $\ell(i)-1$ comparable to $i$.(Note that they are necessarily to the left of $i$.)
        The letter $\ell(i)$ is added into $q_{i-1}$ directly after the occurrences of the letter $\ell(i)$ (if any) immediately following the $C_i$-th letter $\ell(i)-1$.
        
    \end{itemize}
    Finally, define $q(U)=q_n$.
    \end{Algorithm}

See Figure \ref{fig:algo} for an example of this algorithm.

\begin{figure}
    \centering
    
    \begin{tikzpicture}[auto,scale=0.95]
    

    \node (ar3) at (6,2)
    {\begin{tabular}{lll}
    $\ell(1)=0$ & $C_1=0$ & 
 {$q_1=({\color{blue} 0})$}\\
 {$\ell(2)=0$}&
$C_2=0$&
$q_2=(0,{\color{blue} 0})$\\
   {$\ell(3)=1$}&
    $C_3=1$&
    $q_3=(0,{\color{blue} 1},0)$\\
     {$\ell(4)=1$}&{$C_4=1$}&
 {$q_4=(0,1,{\color{blue} 1},0)$}\\ {$\ell(5)=2$}&
{$C_5=1$}&
{$q_5=(0,1,{\color{blue} 2},1,0)$}\end{tabular}};

    
    \node (uio5) at (0,2) {\tikzpicture
    \draw[thick] (0,0) -- (1.5,0) node[xshift=-0.75cm,fill=white] {\begin{tikzpicture} \node (a) {\scriptsize{1}}; \end{tikzpicture}};
    \draw (0,-0.1) -- (0,0.1);
    \draw (1.5,-0.1) -- (1.5,0.1);
    
    \draw[thick] (1,0.4) -- (2.5,0.4) node[xshift=-0.75cm,fill=white] {\begin{tikzpicture} \node (a) {\scriptsize{2}}; \end{tikzpicture}};
    \draw (1,0.3) -- (1,0.5);
    \draw (2.5,0.3) -- (2.5,0.5);
    
    \draw[thick] (1.75,0) -- (3.25,0) node[xshift=-0.75cm,fill=white] {\begin{tikzpicture} \node (a) {\scriptsize{3}}; \end{tikzpicture}};
    \draw (1.75,-0.1) -- (1.75,0.1);
    \draw (3.25,-0.1) -- (3.25,0.1);
    
    \draw[thick] (2.25,0.8) -- (3.75,0.8) node[xshift=-0.75cm,fill=white] {\begin{tikzpicture} \node (a) {\scriptsize{4}}; \end{tikzpicture}};
    \draw (2.25,0.7) -- (2.25,0.9);
    \draw (3.75,0.7) -- (3.75,0.9);
    
    \draw[thick] (3.5,0.4) -- (5,0.4) node[xshift=-0.75cm,fill=white] {\begin{tikzpicture} \node (a) {\scriptsize{$5$}}; \end{tikzpicture}};
    \draw (3.5,0.3) -- (3.5,0.5);
    \draw (5,0.3) -- (5,0.5);
    
    \endtikzpicture};

\end{tikzpicture}

    \caption{Example of Algorithm \ref{algo}}
    \label{fig:algo}
\end{figure}
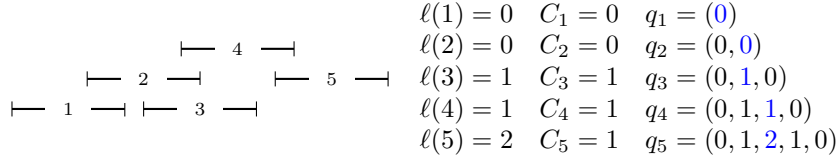

\begin{lemma} \label{asequ} For $U\in \U_n$, we have that $q(U)$ is a part listing corresponding to the area sequence of a Dyck path in $\D_n$. \end{lemma}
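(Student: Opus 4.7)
The plan is to prove by induction on $i$ that the intermediate word $q_i$ produced at step $i$ of Algorithm~\ref{algo} satisfies the area-sequence conditions $q_i[1] = 0$ and $q_i[s] \leq q_i[s-1] + 1$ for all $2 \leq s \leq i$. Taking $i = n$ then yields the statement.

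The base case $q_1 = (0)$ is immediate. For the inductive step, the essential observation is that the position $p$ at which the new letter $\ell(i)$ is placed satisfies $q_{i-1}[p-1] \in \{\ell(i)-1, \ell(i)\}$. This is because the insertion is carried out immediately after the $C_i$-th letter $\ell(i)-1$ together with any contiguous run of $\ell(i)$-letters following it; hence $q_{i-1}[p-1]$ is either that $(\ell(i)-1)$-letter itself, or the last letter of the skipped run of $\ell(i)$'s. In the boundary case $\ell(i) = 0$, I would adopt the natural convention that the "$0$-th letter of $-1$" sits at position $0$, so the insertion occurs after the initial run of $0$-letters; this gives either $p = 1$, or $q_{i-1}[p-1] = 0 = \ell(i)$.

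With this in hand, the two newly-created consecutive pairs of $q_i$ are controlled. First, $q_i[p] = \ell(i) \leq q_{i-1}[p-1] + 1 = q_i[p-1] + 1$, using $q_{i-1}[p-1] \geq \ell(i)-1$. Second, if $p$ is not the last position, then $q_i[p+1] = q_{i-1}[p] \leq q_{i-1}[p-1] + 1 \leq \ell(i)+1 = q_i[p]+1$, using the inductive hypothesis on $q_{i-1}$ together with $q_{i-1}[p-1] \leq \ell(i)$. Every other consecutive pair of $q_i$ coincides with a consecutive pair of $q_{i-1}$ and is valid by induction. The first-letter condition $q_i[1] = 0$ is preserved since the insertion always lies strictly to the right of position $1$: for $\ell(i) \geq 1$ because some $(\ell(i)-1)$-letter precedes it, and for $\ell(i) = 0$ because the $0$ at position $1$ (which exists by induction) lies in the skipped prefix.

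I do not foresee a serious obstacle. The proof is essentially a case check, and the inequalities fall out once the location of the insertion is correctly understood. The most delicate point is the $\ell(i) = 0$ subcase, which requires interpreting the algorithm at a boundary where the letter $\ell(i) - 1 = -1$ does not literally appear in $q_{i-1}$; the convention above resolves this cleanly.
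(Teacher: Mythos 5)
Your proof is correct and follows essentially the same approach as the paper's: observe that the new letter $\ell(i)$ is always inserted immediately after a letter equal to $\ell(i)-1$ or $\ell(i)$, then verify the two newly-created adjacencies and note that all others are unchanged. The only cosmetic difference is in how you bound the letter following the insertion point: you apply the inductive hypothesis on $q_{i-1}$ together with $q_{i-1}[p-1]\leq\ell(i)$, whereas the paper invokes the fact that $\ell$ is weakly increasing (so $\ell(i)\geq\max q_{i-1}$); both yield $q_{i-1}[p]\leq\ell(i)+1$. You also spell out the boundary case $\ell(i)=0$ more carefully than the paper does, which is a harmless improvement in rigor.
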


\begin{proof} 

Recall that the area sequences of Dyck paths in $\D_n$ are the sequences of non-negative integers $(a_1,\dots,a_n)$ characterized by the two properties that $a_1=0$ and $a_i \leq a_{i-1}+1$.

Let $U$ be a unit interval order and $q(U)$ be the sequence constructed by the above algorithm. By definition of the algorithm, $0$ is the first element added to the list and will remain at the first position in $q(U)$ since every level $\ell(i)$ will be added after this $0$. Thus, we have that $q(U)$ starts with a $0$. 
Also by construction, we have that $\ell(i)$ is inserted directly after a letter which is either $\ell(i)-1$ or $\ell(i)$. Thus, when $\ell(i)$ is inserted, it satisfies the condition that its value is at most one more than the value of its preceding letter. And since at each step we add letters greater or equal to the maximal letter of the previous word, the condition will remain true as we carry out
all subsequent insertions.
\end{proof}

Given a part listing $w$ of length $n$, we define a partial order on
$\{1,\dots,n\}$ as described in the introduction. We denote it by $P(w)$, and we write 
$\prec_{P(w)}$ for its order relation.

\begin{definition} 
    Given a unit interval order $U\in \U_n$, and its corresponding part listing
    $w=q(U)$, we define a permutation
    $f_U:\{1,\dots,n\}\rightarrow \{1,\dots,n\}$ as follows. We first
    consider the positions $i$ in the part listing with $w_i=0$, and we
    number them starting with 1 from left to right. We then number the positions $i$ with $w_i=1$ from left to right, and continue in the same way. The
    number that is eventually assigned to position $i$ is $f(i)$.
    \end{definition}

We now define a new order $\prec_f$ on $\{1,\dots,n\}$ by saying that
$i\prec_f j$ if and only if $f^{-1}(i) \prec_{P(w)} f^{-1}(j)$. By definition,
this poset is isomorphic to $P(w)$.

\begin{proposition}\label{iso} For $U$ a unit interval order, $w=q(U)$ the corresponding part
  listing, and $f$ the bijection defined above, $\prec_f$ agrees with the
  original order on $U$. \end{proposition}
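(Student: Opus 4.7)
The plan is to show directly that $i \prec_U j$ if and only if $f^{-1}(i) \prec_{P(w)} f^{-1}(j)$ for all $i, j \in \{1,\ldots,n\}$. Two preliminary observations simplify matters. First, the level function $\ell$ is weakly monotone in the index: if $\ell(i-1) \geq 1$ and $m \prec i-1$ realizes the level, then $m \prec i$ by the second property of Lemma~\ref{defequivalent}, so $\ell(i) \geq \ell(i-1)$. Hence elements of a given level form a consecutive range of indices. Second, the same second property forces $\{i : i \prec j\}$ to be an initial segment $\{1,\ldots,m\}$, so the level-$a$ predecessors of $j$ are always the first several level-$a$ elements in index order; when $a = \ell(j) - 1$ they are the first $C_j$ of them, and the $C$-values are weakly increasing along each level.

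The key technical step is an invariant on the algorithm: after each step, for every level $a$, the copies of $a$ in the current word appear in the same left-to-right order as the indices of their corresponding elements. I would prove this by induction on the steps. Because $\ell$ is weakly monotone, by the time a level-$a$ element is being processed, all level-$(a-1)$ letters are in place and no further $(a-1)$'s will be inserted. Together with the monotonicity of the $C$-values along level $a$, this lets one verify that the newly inserted $a$ always lands strictly to the right of every existing $a$, which is enough. Once the invariant is established, $f^{-1}(i)$ acquires a concrete algorithmic meaning: it is the position in $w$ of the letter that the algorithm inserted at step $i$. In particular $w_{f^{-1}(i)} = \ell(i)$.

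The proposition then follows by a case analysis on $d = \ell(j) - \ell(i)$. If $d \leq 0$, neither $i \prec_U j$ nor $f^{-1}(i) \prec_{P(w)} f^{-1}(j)$ can hold. If $d \geq 2$, the relation on the $P(w)$ side is immediate; and for $i \prec_U j$ one picks $m \prec j$ with $\ell(m) = \ell(j) - 1 > \ell(i)$, so that $m > i$ by monotonicity of $\ell$, and the second property of Lemma~\ref{defequivalent} applied with $x = m$ and $x' = i$ gives $i \prec_U j$. The substantive case is $d = 1$: the invariant says that $i$'s letter is the $k_i$-th occurrence of $\ell(i)$ in $w$, where $k_i$ is the rank of $i$ among level-$\ell(i)$ elements in index order, and similarly for $j$; the algorithm places $j$'s letter immediately after the $C_j$-th level-$\ell(i)$ letter, and later insertions cannot disrupt this relative order, so $f^{-1}(i) < f^{-1}(j)$ if and only if $k_i \leq C_j$, which by the preliminary observation is exactly $i \prec_U j$. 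The main obstacle is the invariant itself: one must handle the fact that level-$(a+1)$ and higher insertions can later be interleaved among the level-$a$ letters, verifying carefully that this never reorders them.
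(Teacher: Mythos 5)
Your proposal is correct and follows essentially the same route as the paper: establish that $w_{f^{-1}(i)}=\ell(i)$ via the observation that same-level letters end up ordered by index (which the paper states tersely as ``the copies of $\ell$ in $w$ are inserted from left to right''), and then run a case analysis on $\ell(j)-\ell(i)$, with the only substantive case being a difference of $1$, handled by comparing $i$'s rank among level-$\ell(i)$ elements with $C_j$. You have simply made explicit, as a stated invariant with a sketched induction, the ordering fact the paper uses implicitly, and you have organized the cases by level difference rather than by comparability in $U$ — a cosmetic reorganization, not a different method.
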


\begin{proof} Note that $\ell$ is a weakly increasing function of $\{1,\dots,n\}$. 
  Let $1\leq i\leq n$. Because the copies of $\ell$ in $w$ are
  inserted from left to right, $f^{-1}(i)$ is the position in $w$ where
  $\ell(i)$ wound up, that is to say, $w_{f^{-1}(i)}=\ell(i)$. 

  Let $1\leq i<j\leq n$. Suppose $I_i$ is strictly to the left of $I_j$ in $U$.
  Thus, $\ell(j)>\ell(i)$. Suppose first that $\ell(j) \geq \ell(i)+2$.
  In this case, $w_{f^{-1}(j)} \geq w_{f^{-1}(i)}+2$, so $j \succ_f i$, as desired.

  Suppose now that $\ell(j)=\ell(i)+1$. Suppose that $I_j$ is strictly to
  the right of $C$ intervals of level $\ell(i)$. Note that $I_i$ is one of
  them by assumption. Thus, when $\ell(j)$ was inserted into $w$, it was inserted to the
  right of the letter $\ell(i)$; this persists as other letters are inserted.
  It follows that also in this case, $j\succ_f i$, as desired.

  Suppose now that $I_i$ and $I_j$ overlap. In this case, $\ell(j)\leq \ell(i)+1$, because the $I_k$ strictly to the left of $I_j$ are weakly to the left of
  $I_i$, so have level at most $\ell(i)$. We must therefore consider two cases,
  when $\ell(j)=\ell(i)$ and when $\ell(j)=\ell(i)+1$.

  Consider first the case where $\ell(j)=\ell(i)$. In this case,
  $w_{f^{-1}(j)}=w_{f^{-1}(i)}$, so $i$ and $j$ are incomparable with respect to
  $\prec_f$, as desired.

  The case where $\ell(j)=\ell(i)+1$ is disposed of similarly to the
  case $\ell(j)=\ell(i)+1$ where $I_i$ and $I_j$ do not overlap; in this
  case, the result is that $f^{-1}(j)<f^{-1}(i)$, with the result that
  $i$ and $j$ are incomparable with respect to $\prec_f$, as desired.
  This completes the proof.
  \end{proof}

In the introduction, for $U$ a unit interval order, we defined 
$\tilde p(U)$ to be the unique part listing $w$ such that $P(w)$ is isomorphic to $U$ and $w$ is also an area sequence of a Dyck path. We are 
now in a position to establish that 
the map $\tilde p$ is well-defined.

\begin{proposition}
For $U$ a unit interval order, we have $\tilde p(U)$ is well-defined and $\tilde p(U)=q(U)$.
\end{proposition}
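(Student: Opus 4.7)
The plan is to split the proposition into an existence claim and a uniqueness claim, and to dispatch uniqueness by showing that $q$ is a bijection from $\U_n$ to the set of area sequences of Dyck paths of length~$n$.

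Existence is immediate from the preceding results: Lemma~\ref{asequ} gives that $q(U)$ is an area sequence of a Dyck path, and Proposition~\ref{iso} gives that $P(q(U))\cong U$, so $w=q(U)$ satisfies the defining conditions of $\tilde p(U)$.

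For uniqueness, I first aim to show that $q$ is injective. Suppose $q(U_1)=q(U_2)=w$. The permutation $f$ defined just before Proposition~\ref{iso} is determined solely by the sequence $w$, so the same $f$ is produced whether we start from $U_1$ or from $U_2$. Applying Proposition~\ref{iso} in both cases then gives $U_1=\prec_f=U_2$. Combined with the equality $|\U_n|=|\D_n|$, which follows from the bijection $a$ established in Section~\ref{Area}, injectivity forces $q$ to be a bijection between these two finite sets of the same cardinality. Now, given any area sequence $w$ with $P(w)\cong U$, I set $V=q^{-1}(w)\in\U_n$. By Proposition~\ref{iso}, $V\cong P(q(V))=P(w)\cong U$, so $V$ and $U$ are isomorphic as posets; since each isomorphism class of unit interval orders is represented by a unique element of $\U_n$ (a fact already appealed to in Section~\ref{intro}, where $P(w)$ is declared isomorphic to a \emph{unique} unit interval order), we conclude $V=U$, and hence $w=q(V)=q(U)$.

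The substantive technical input has already been absorbed into Proposition~\ref{iso}; what remains here is a short counting argument. The most delicate point is the closing appeal to the uniqueness of representatives in $\U_n$, which the introduction has already committed to.
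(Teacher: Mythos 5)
Your proof is correct and takes essentially the same approach as the paper: both rely on Lemma~\ref{asequ} and Proposition~\ref{iso} for existence, deduce injectivity of $q$ from Proposition~\ref{iso}, upgrade to a bijection via the cardinality $|\U_n|=|\D_n|$, and then conclude uniqueness. Your injectivity step (observing that $f$ and hence $\prec_f$ are determined solely by $w$, giving equality $U_1=U_2$ directly) is a slightly more explicit rendering of what the paper asserts in one line, and your final uniqueness paragraph spells out the appeal to uniqueness of representatives in $\U_n$ that the paper leaves implicit.
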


\begin{proof} 
For each unit interval order $U\in \U_n$, the part listing $q(U)$ is 
the area sequence of a Dyck path
by Lemma \ref{asequ}. 
By Proposition \ref{iso}, the poset $P(q(U))$ is isomorphic to $U$. This shows in particular that the map $q$ must be injective. Since there are the same number of unit interval orders in $\U_n$ as of Dyck paths in $\D_n$, $q$ must be a bijection. Thus, Proposition \ref{iso} tells us that if $w$ and $w'$ are two different area sequences of Dyck paths, then $P(w)$ and $P(w')$ cannot be isomorphic. It follows that, for any $U$, there is exactly one Dyck path $w$ such that $P(w)$ is isomorphic to $U$, namely, $q(U)$. Thus, $\tilde p(U)$ is well-defined and equals $q(U)$.
\end{proof}

\section{The zeta map} \label{Zeta}
 We now describe the map $\zeta:\D_n\rightarrow \D_n$.
 Start with $D\in \D_n$. We begin by labelling
 the lattice points that make up the path $D$ (except the very first):
 we label the top end-point of an up step with the letter $a$, and we label
 the right endpoint of a right step with the letter $b$.

We then read the labels: first on the line $y=x$, from bottom left to top right,
 then on the line $y=x+1$, again in the same direction, then on the line
 $y=x+2$, etc. Interpret $b$ as designating an up step, and $a$ as
 designating a right step. This defines a lattice path from $(0,0)$ to
 $(n,n)$. Define this to be $\zeta(D)$. See Figure \ref{fig:zeta} for an example of this map.

  \begin{lemma}\label{app} Starting from $D\in\D_n$, the path $\zeta(D)$ is a
    Dyck path. \end{lemma}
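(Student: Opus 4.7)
The plan is to verify the two conditions that characterize a Dyck path. First, $D$ has $n$ up steps (each contributing a label $a$) and $n$ right steps (each contributing a $b$), so $\zeta(D)$ has $n$ right steps (the $a$'s) and $n$ up steps (the $b$'s), ending at $(n,n)$. The substantive condition is that every prefix of the label sequence, read in the zeta order, contains at least as many $b$'s as $a$'s.

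To check this, I group labels by diagonal. For $k \geq 0$, let $U_k$ denote the number of up steps of $D$ whose top endpoint lies on $y = x + k$, and $R_k$ the number of right steps whose right endpoint lies on that line. A balance-of-crossings argument across the line $y = x + k - 1/2$ gives $U_k = R_{k-1}$ for every $k \geq 1$: since $D$ begins and ends at diagonal $0$ (below this line), the upward crossings (up steps from diagonal $k-1$ to $k$, counted by $U_k$) must equal the downward crossings (right steps from diagonal $k$ to $k-1$, counted by $R_{k-1}$).

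Using $U_k = R_{k-1}$, the cumulative $\#b - \#a$ after reading all labels on diagonals $0$ through $k$ telescopes to $\sum_{i=0}^{k} R_i - \sum_{i=1}^{k} U_i = R_k \geq 0$. Inside diagonal $k$ (for $k \geq 1$), I enter the reading with cushion $R_{k-1}$, and the total number of $a$-labels on this diagonal is $U_k = R_{k-1}$; hence at any intermediate position the $a$-count read so far is at most $R_{k-1}$ while the $b$-count is non-negative, so the running $\#b - \#a$ stays at least $R_{k-1} - R_{k-1} = 0$. The case $k = 0$ is immediate since every point $(i,i)$ on $D$ with $i \geq 1$ is the endpoint of a right step, and so every label on the diagonal $y = x$ is a $b$. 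The only substantive ingredient is the crossings identity $U_k = R_{k-1}$; the rest is bookkeeping, and in particular the ordering of the $a$'s and $b$'s within a single diagonal turns out to be irrelevant.
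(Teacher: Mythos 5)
Your proof is correct. The paper reaches the same conclusion via a matching rather than a prefix count: for each $t\geq 0$, the up steps of $D$ crossing from diagonal $t$ to $t+1$ alternate with, and are equinumerous to, the right steps crossing back from $t+1$ to $t$; the paper matches the $i$-th such up step with the $i$-th such right step. The $b$ label of the matched right step lies on diagonal $t$ while the $a$ label of the matched up step lies on diagonal $t+1$, so in the reading order the $b$ (an up step of $\zeta(D)$) strictly precedes the $a$ (a right step of $\zeta(D)$); a matching in which every up step of $\zeta(D)$ is paired with a later right step immediately shows $\zeta(D)$ is a Dyck path. Your crossings identity $U_k = R_{k-1}$ is precisely the equinumerosity fact the paper's pairing relies on, so the two proofs rest on the same observation about the diagonal structure of $D$; the paper packages it as a bijection while you carry out the equivalent telescoping prefix-sum directly. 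Your closing remark that the within-diagonal ordering of labels is irrelevant is implicit in the paper's version as well, since each matched $b$ sits on a strictly lower diagonal than its partner $a$.
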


\begin{figure}
    \centering
    
    \begin{tikzpicture}[auto]
 
 \node[label={[xshift=0cm, yshift=-6.3cm] $w=a\,a\,a\,b\,a\,b\,a\,b\,b\,b\,a\,b$}] (d) at (1,0) {\begin{tikzpicture}[scale=0.8]
    \fill(2cm,1.5cm); 
    \draw (0,0) grid (6,6);
    \draw[dashed,thin] (-0.5,-0.5)--(6.5,6.5);
    \draw[dashed,thin] (-0.5,0.5)--(5.5,6.5);
    \draw[dashed,thin] (-0.5,1.5)--(4.5,6.5);
    \draw[dashed,thin] (-0.5,2.5)--(3.5,6.5);
    \draw[blue,line width=0.8mm] (0,0)--(0,1);
    \draw[blue,line width=0.8mm] (0,1)--(0,2);
    \draw[blue,line width=0.8mm] (0,2)--(0,3);
    \draw[blue,line width=0.8mm] (0,3)--(1,3);
    \draw[blue,line width=0.8mm] (1,3)--(1,4);
    \draw[blue,line width=0.8mm] (1,4)--(2,4);
    \draw[blue,line width=0.8mm] (2,4)--(2,5);
    \draw[blue,line width=0.8mm] (2,5)--(3,5);
    \draw[blue,line width=0.8mm] (3,5)--(4,5);
    \draw[blue,line width=0.8mm] (4,5)--(5,5);
    \draw[blue,line width=0.8mm] (5,5)--(5,6);
    \draw[blue,line width=0.8mm] (5,6)--(6,6);
    \draw (0,1) node [left]{$a$};
    \draw (0,2) node [left]{$a$};
    \draw (0,3) node [left]{$a$};
    \draw (1,3) node [above left]{$b$};
    \draw (1,4) node [above left]{$a$};
    \draw (2,4) node [above left]{$b$};
    \draw (2,5) node [above left]{$a$};
    \draw (3,5) node [above left]{$b$};
    \draw (4,5) node [above left]{$b$};
    \draw (5,5) node [above left]{$b$};
    \draw (5,6) node [above left]{$a$};
    \draw (6,6) node [above]{$b$};
 \end{tikzpicture}};
 
 \node[label={[xshift=0cm, yshift=-6.3cm] $\zeta(w) = a\,a\,b\,a\,b\,b\,a\,a\,a\,b\,b\,b$}] (c) at (8,0) {\begin{tikzpicture}[scale=0.8]
    \fill(2cm,1.5cm); 
    \draw (0,0) grid (6,6);
    \draw[dashed, thin] (-0.5,-0.5)--(6.5,6.5);
    \draw[blue,line width=0.8mm] (0,0)--(0,1);
    \draw[blue,line width=0.8mm] (0,1)--(0,2);
    \draw[blue,line width=0.8mm] (0,2)--(1,2);
    \draw[blue,line width=0.8mm] (1,2)--(1,3);
    \draw[blue,line width=0.8mm] (1,3)--(2,3);
    \draw[blue,line width=0.8mm] (2,3)--(3,3);
    \draw[blue,line width=0.8mm] (3,3)--(3,4);
    \draw[blue,line width=0.8mm] (3,4)--(3,5);
    \draw[blue,line width=0.8mm] (3,5)--(3,6);
    \draw[blue,line width=0.8mm] (3,6)--(4,6);
    \draw[blue,line width=0.8mm] (4,6)--(5,6);
    \draw[blue,line width=0.8mm] (5,6)--(6,6);
    \draw (0,1) node [left]{$a$};
    \draw (0,2) node [left]{$a$};
    \draw (1,2) node [above left]{$b$};
    \draw (1,3) node [above left]{$a$};
    \draw (2,3) node [above left]{$b$};
    \draw (3,3) node [above left]{$b$};
    \draw (3,4) node [above left]{$a$};
    \draw (3,5) node [above left]{$a$};
    \draw (3,6) node [above]{$a$};
    \draw (4,6) node [above]{$b$};
    \draw (5,6) node [above]{$b$};
    \draw (6,6) node [above]{$b$};
 \end{tikzpicture}} ;
 \draw[->] (d) to node {$\zeta$} (c);
\end{tikzpicture}

    \caption{Example of the zeta map }
    \label{fig:zeta}
\end{figure}
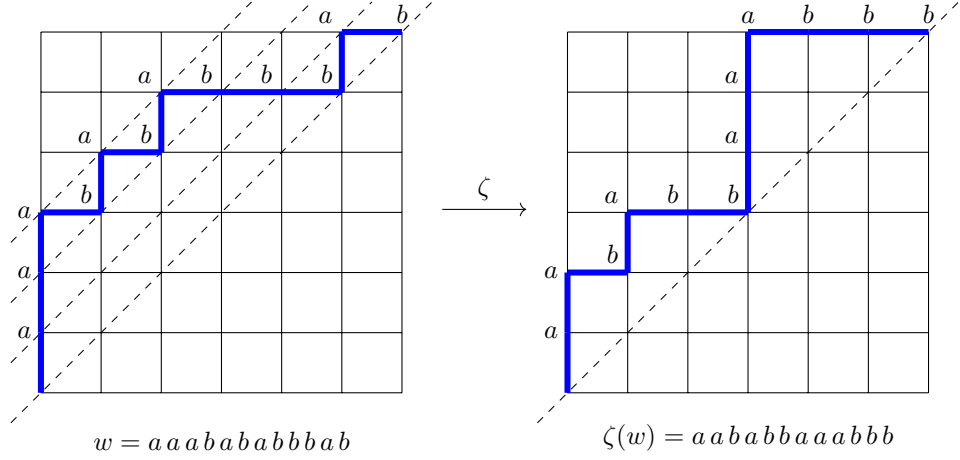

   \begin{proof}
   Let $D\in\D_n$.
  We define a matching between the up steps and the right steps of $D$ as follows. For all non-negative integers $t$, look at the part of $D$ between the lines $y=x+t$ and $y=x+(t+1)$. This necessarily consists of an alternating sequence of the same number of up steps and right steps. We define our pairing by matching the $i$-{th} up step with the $i$-{th} right step. By definition of $\zeta$, these two matched edges contribute an up step and a right step to $\zeta(D)$, and the up step comes before the right step. Thus $\zeta(D)$ will always stay above the diagonal. 
\end{proof}

\section{Proof of the conjecture}\label{proof}

The proof of the conjecture (Theorem \ref{theoremConj}) will proceed by induction. We suppose that for a unit interval order $U$, we know that $\zeta(p(U))$ and $a(U)$ coincide. We then consider what happens when we add a new rightmost interval to $U$. Proving that this changes the result of applying each of the maps in the same way, we conclude that the two maps also coincide on the larger poset. This proves the conjecture by induction.

\begin{definition}

   A peak in a Dyck path consists of an up step followed by a
    right step. In the Dyck word, this amounts to an occurrence of the
    consecutive pair of letters `$ab$' and in the corresponding area sequence $(a_1,\dots,a_n)$ this amounts to have $a_{i}\leq a_{i-1}$ for a given $i$.

    By adding a peak to a Dyck path, we mean the insertion,
    at some position, of `$ab$' into the Dyck path. The result of adding a peak is
    again a Dyck path. Note that adding a peak does not necessarily
    increase the number of peaks: if the peak is added after a letter `$a$' and
  before a letter `$b$', the number of peaks does not change.

   We say that a peak of a Dyck path is a maximal peak if the top of the up step lies on the highest line of slope $1$ that touches the Dyck path. We will refer to the last peak of the Dyck path as the final peak, and to the last maximal peak as the final maximal peak.

  We say that a peak is of height $i$ if the top of the up step is on the line
  $y=x+i$.
\end{definition}

 In the sequel, let $U\in \U_n$ and $U'$ be obtained from
  $U$ by adding an $(n+1)$-st interval of unit length to the right of those
  of $U$. We note $\ell$ the level of this added interval.

 For an illustration of the following lemmas, see Figure \ref{fig:expan}.

\begin{lemma}\label{add}  Let $U$ be a unit interval order, and let $U'$ be obtained from it by adding an interval to the right of the intervals of $U$. 
The Dyck path $p(U')$ is obtained from $p(U)$ by adding a final maximal peak.
\end{lemma}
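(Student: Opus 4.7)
The plan is to run Algorithm~\ref{algo} in parallel for $U$ and $U'$. Since $I_{n+1}$ cannot be a predecessor of any $I_i$ with $i\le n$, the level function on $\{1,\ldots,n\}$ is unchanged in passing from $U$ to $U'$, so the first $n$ iterations reproduce $q(U)$ unchanged, and the final iteration inserts a single letter $\ell:=\ell(n+1)$ into $q(U)$ to produce $q(U')$. It therefore suffices to identify this one-letter insertion with inserting ``$ab$'' in the Dyck word at a position that makes the resulting peak the final maximal peak.

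Let $M$ denote the maximal level in $U$. I would first establish $\ell \in \{M, M+1\}$: the upper bound is immediate, while for the lower bound, any level-$M$ interval in $U$ has a level-$(M-1)$ predecessor whose right endpoint lies strictly below the left endpoint of $I_{n+1}$ (using that $I_{n+1}$'s left endpoint is maximal among all intervals), so $\ell\ge M$. Thus $\ell = \max q(U')$. The crux of the proof is then the claim that $C_{n+1}\ge C_j$ for every level-$\ell$ interval $I_j$ in $U$. Indeed, any level-$(\ell-1)$ predecessor $I_k$ of such an $I_j$ satisfies $I_k$'s right endpoint $<$ $I_j$'s left endpoint $\le$ $I_{n+1}$'s left endpoint, so $I_k$ is also a level-$(\ell-1)$ predecessor of $I_{n+1}$. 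In Algorithm~\ref{algo}, the letter $\ell$ corresponding to such $I_j$ is placed into the run of $\ell$'s immediately following the $C_j$-th letter $\ell-1$ of the area sequence; hence every letter $\ell$ in $q(U)$ lies in one of the runs following the first $C_{n+1}$ letters $\ell-1$, and in particular no $\ell$ appears after the run following the $C_{n+1}$-th letter $\ell-1$.

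Finally, I would translate to the Dyck word. The algorithm places the new letter $\ell$ at the area-sequence position $p$ just after that run in $q(U')$. The first entry of $q(U)$ past this insertion point is neither equal to $\ell$ (the maximal run of $\ell$'s ended) nor exceeds $M\le\ell$, hence is strictly less than $\ell$. It follows that the one-letter area-sequence insertion corresponds, at the level of the Dyck word, to inserting a single adjacent ``$ab$'', i.e., adding one peak. The inserted peak has top on the line $y=x+\ell+1$, which since $\ell=\max q(U')$ is the highest line of slope $1$ touching $p(U')$, so it is maximal; and because no $\ell$ appears in $q(U')$ after the inserted one, no later peak of $p(U')$ reaches this line, so the inserted peak is in fact the final maximal peak. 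The main subtlety is the maximality of $C_{n+1}$ among the $C_j$ for level-$\ell$ intervals, which relies on combining the unit-length property with the rightmost placement of $I_{n+1}$; once this is in hand, the area-sequence-to-word translation and the identification of the final maximal peak follow routinely from the structure of the algorithm.
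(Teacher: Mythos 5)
Your proof is correct and follows essentially the same strategy as the paper's: track the single-letter insertion made by Algorithm \ref{algo}, observe that the inserted $\ell$ is at least as large as every other letter and that no later letter equals $\ell$, then translate the insertion into an added peak of maximal height lying to the right of all other maximal peaks. The paper's version is terser and asserts the two key facts --- that the letters before the insertion point are at most $\ell$, and that the letters after are at most $\ell-1$ --- without spelling out why; your proof supplies the missing justifications (showing $\ell \geq M$ and $C_{n+1}\geq C_j$ via the unit-length and rightmost-placement hypotheses), which is a genuine improvement in rigor over what appears in the paper, but not a different route.
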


\begin{proof}
  By Algorithm \ref{algo}, in going from $p(U)$ to $p(U')$, we insert a letter $\ell$ into $p(U)$ such that the letter before is either $\ell$ or $\ell-1$. The letters before are at most $\ell$ and the letters
  after are at most $\ell-1$. Adding a letter $\ell$ in this way
  adds a final maximal peak, since we add the rightmost maximal letter $\ell$ to the area sequence $p(U)$.
  \end{proof}
  
\begin{figure}
    \centering

\begin{tikzpicture}[scale=0.8]

    \node (uio5) at (0,0) {\tikzpicture
    \draw[thick] (0,0) -- (1.5,0) node[xshift=-0.75cm,fill=white] {\begin{tikzpicture} \node (a) {\scriptsize{1}}; \end{tikzpicture}};
    \draw (0,-0.1) -- (0,0.1);
    \draw (1.5,-0.1) -- (1.5,0.1);
    
    
    \draw[thick] (1.75,0) -- (3.25,0) node[xshift=-0.75cm,fill=white] {\begin{tikzpicture} \node (a) {\scriptsize{2}}; \end{tikzpicture}};
    \draw (1.75,-0.1) -- (1.75,0.1);
    \draw (3.25,-0.1) -- (3.25,0.1);
    
    \draw[thick] (2.75,0.4) -- (4.25,0.4) node[xshift=-0.75cm,fill=white] {\begin{tikzpicture} \node (a) {\scriptsize{3}}; \end{tikzpicture}};
    \draw (2.75,0.3) -- (2.75,0.5);
    \draw (4.25,0.3) -- (4.25,0.5);
    
    \draw[thick] (3.5,0) -- (5,0) node[xshift=-0.75cm,fill=white] {\begin{tikzpicture} \node (a) {\scriptsize{4}}; \end{tikzpicture}};
    \draw (3.5,0.1) -- (3.5,-0.1);
    \draw (5,0.1) -- (5,-0.1);
    
    \endtikzpicture};

  \draw[->] (4,0) -- (5,0);

  \node (uio6) at (9.5,0.2) {\tikzpicture
    \draw[thick] (0,0) -- (1.5,0) node[xshift=-0.75cm,fill=white] {\begin{tikzpicture} \node (a) {\scriptsize{1}}; \end{tikzpicture}};
    \draw (0,-0.1) -- (0,0.1);
    \draw (1.5,-0.1) -- (1.5,0.1);
    
    \draw[thick] (3.87,0.8) -- (5.37,0.8) node[xshift=-0.75cm,fill=white] {\begin{tikzpicture} \node (a) {\scriptsize{5}}; \end{tikzpicture}};
    \draw (3.87,0.7) -- (3.87,0.9);
    \draw (5.37,0.7) -- (5.37,0.9);
    
    \draw[thick] (1.75,0) -- (3.25,0) node[xshift=-0.75cm,fill=white] {\begin{tikzpicture} \node (a) {\scriptsize{2}}; \end{tikzpicture}};
    \draw (1.75,-0.1) -- (1.75,0.1);
    \draw (3.25,-0.1) -- (3.25,0.1);
    
    \draw[thick] (2.75,0.4) -- (4.25,0.4) node[xshift=-0.75cm,fill=white] {\begin{tikzpicture} \node (a) {\scriptsize{3}}; \end{tikzpicture}};
    \draw (2.75,0.3) -- (2.75,0.5);
    \draw (4.25,0.3) -- (4.25,0.5);
    
    \draw[thick] (3.5,0) -- (5,0) node[xshift=-0.75cm,fill=white] {\begin{tikzpicture} \node (a) {\scriptsize{4}}; \end{tikzpicture}};
    \draw (3.5,0.1) -- (3.5,-0.1);
    \draw (5,0.1) -- (5,-0.1);
    
    \endtikzpicture};

    \draw (0.5,-1.2) node{$U$};
    \draw (10,-1.2) node{$U'$};

\begin{scope}[xshift= -1.5cm, yshift=-8cm]
   \fill(2cm,1.5cm); 
    \draw (0,0) grid (4,4);
    \draw[dashed,thin] (-0.5,-0.5)--(4.5,4.5);
    \draw[blue,line width=0.6mm] (0,0)--(0,3)--(2,3)--(2,4)--(4,4);
    
    
    \draw (0,1) node[left]{$a$};
    \draw (0,2) node[left]{$a$};
    \draw (0,3) node[left]{$a$};
    \draw (1,3) node[above left]{$b$};
    \draw (2,3) node[above left]{$b$};
    \draw (2,4) node[above]{$a$};
    \draw (3,4) node[above]{$b$};
    \draw (4,4) node[above]{$b$};

\draw (2,-1.5) node{$p(U)=(0,1,2,1)$};
\draw (2,-2.2) node{$a\,a\,a\,b\,b\,a\,b\,b$};

\draw[->] (5.5,2)--(6.5,2);

\begin{scope}[scale=1, xshift= 8cm, yshift= -0.5cm]
    \fill(2cm,1.5cm); 
    \draw (0,0) grid (5,5);
    \draw[dashed,thin] (-0.5,-0.5)--(5.5,5.5);
    \draw[blue,line width=0.6mm] (0,0)--(0,3)--(1,3);
    \draw[blue,line width=0.6mm] (2,4)--(3,4)--(3,5)--(5,5);
    
    \draw[red,line width=0.6mm] 
    (1,3)--(1,4)--(2,4);
    
    
    \draw (0,1) node [left]{$a$};
    
    
    \draw (0,2) node [left]{$a$};
    
    
    \draw (0,3) node [left]{$a$};
    
    \draw (1,4) node{$\textcolor{red}{\bullet}$};
    
    \draw (1,3) node [above left]{$b$};
    
    \draw (2,4) node{$\textcolor{red}{\bullet}$};
    
    \draw (1,4) node [above left, red]{$a$};
    
    
    \draw (2,4) node [above left, red]{$b$};
    
    
    \draw (3,4) node [above left]{$b$};
    
    
    \draw (3,5) node[above]{$a$};
    
    
    \draw (4,5) node[above]{$b$};
    
    
    \draw (5,5) node[above]{$b$};
    
\draw (2.5,-1) node{$p(U')=(0,1,2,\textcolor{red}{2},1)$};
\draw (2.5,-1.7) node{$a\,a\,a\,b\,\textcolor{red}{a\,b}\,b\,a\,b\,b$};

 \end{scope}

\begin{scope}[scale=1, yshift=-9 cm]
 \fill(2cm,1.5cm); 
    \draw (0,0) grid (4,4);
    \draw[dashed,thin] (-0.5,-0.5)--(4.5,4.5);
    \draw[blue,line width=0.6mm] (0,0)--(0,1)--(1,1)--(1,3)--(2,3)--(2,4)--(4,4);
    
    
    \draw (0,1) node[left]{$a$};
    \draw (1,1) node[above left]{$b$};
    \draw (1,2) node[above left]{$a$};
    \draw (1,3) node[above left]{$a$};
    \draw (2,3) node[above left]{$b$};
    \draw (2,4) node[above]{$a$};
    \draw (3,4) node[above]{$b$};
    \draw (4,4) node[above]{$b$};

\draw (2,-1.5) node{$\zeta(p(U))=a\,b\,a\,a\,b\,a\,b\,b$};

\draw[->] (5.5,2)--(6.5,2);
\end{scope}

 \begin{scope}[scale=1,xshift= 8cm, yshift= -9.5cm]
    \fill(2cm,1.5cm); 
    \draw (0,0) grid (5,5);
    \draw[dashed,thin] (-0.5,-0.5)--(5.5,5.5);
    \draw[blue,line width=0.6mm] (0,0)--(0,1)--(1,1)--(1,3)--(2,3)--(2,4);
    \draw[blue,line width=0.6mm] (2,5)--(4,5);

    \draw[red,line width=0.6mm] 
    (2,4)--(2,5) ;
    \draw[red,line width=0.6mm] 
    (4,5)--(5,5);
    
    
    \draw (0,1) node [left]{$a$};
    
    
    \draw (1,1) node [above left]{$b$};
    
    
    \draw (1,2) node [above left]{$a$};
    
    \draw (1,3) node [above left]{$a$};
    
    \draw (2,3) node [above left]{$b$};
    
    \draw (2,4) node [above left]{$a$};
    
    
    \draw (2,5) node [above, red]{$a$};
    
    \draw (3,5) node [above]{$b$};
    
    \draw (4,5) node [above]{$b$};
    
    \draw (2,5) node{$\textcolor{red}{\bullet}$};
    
    \draw (5,5) node[above,red]{$b$};
    
     \draw (5,5) node{$\textcolor{red}{\bullet}$};

\draw (2.5,-1) node{$\zeta(p(U'))=a\,b\,a\,a\,b\,a\,\textcolor{red}{a}\,b\,b\,\textcolor{red}{b}$};

 \end{scope}
 
 \end{scope}

 \end{tikzpicture}

    \caption{Adding of an interval of level $2$ that creates a new peak in $p$.}
    \label{fig:expan}
\end{figure}
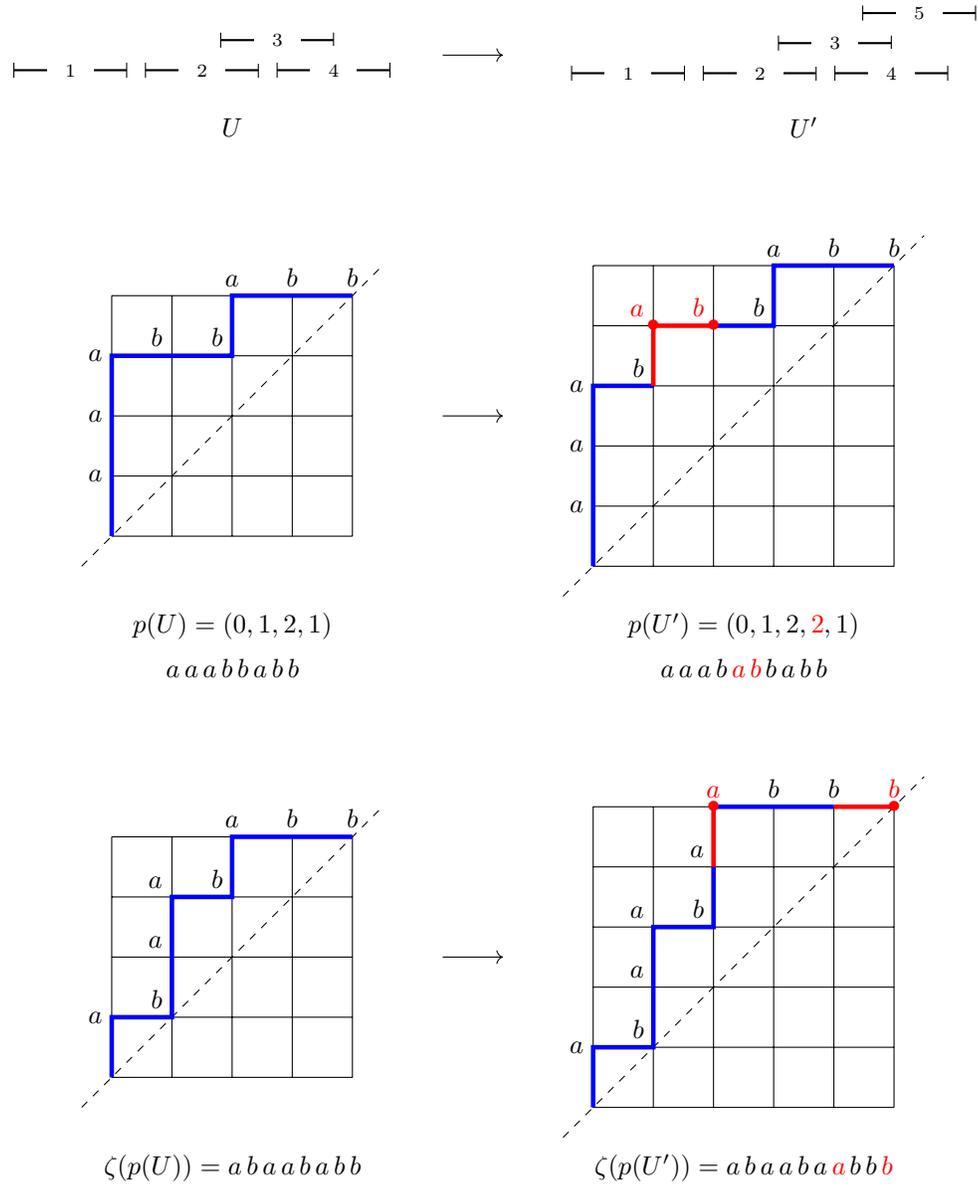

\begin{lemma}\label{deb}
Let $U\in \U_n$, and let $U'$ be obtained from it by adding an interval to the right of the intervals of $U$.
   The Dyck path
  $\zeta(p(U'))$ is obtained from $\zeta(p(U))$ by adding a final peak in position $(n-r,n+1)$, where $r$ is the sum of the number
  of occurrences of the letter $\ell$ in $p(U)$ and of the number of occurrences of the letter $\ell-1$ appearing after the position of the added letter $\ell$ in $p(U')$.
\end{lemma}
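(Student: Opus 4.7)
The plan is to understand precisely how the $\zeta$-reading changes when we pass from $p(U)$ to $p(U')$, and to read off the new final peak from this change.

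First, I would translate Lemma \ref{add} into path-level terms. By that lemma, $p(U')$ is obtained from $p(U)$ by inserting the value $\ell$ at some position $k$ of the area sequence, with $w_{k-1}\in\{\ell-1,\ell\}$ and $w_i<\ell$ for all $i\geq k$. In terms of Dyck paths, this introduces one new up step whose top is at $(k-1-\ell,k)$ and one new right step whose right endpoint is at $(k-\ell,k)$; every other step of $p(U')$ corresponds to a step of $p(U)$, with coordinates either unchanged or shifted by $(+1,+1)$ according as it precedes or follows the inserted up step in path order. Under this correspondence every old label retains its $y-x$ value, and the $x$-order of old labels on each diagonal is preserved: on any given diagonal the old $x$-coordinates of the unshifted labels are strictly smaller than those of the shifted ones, so the $(+1,+1)$ shift preserves the overall $x$-order.

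Next, I would locate the two new labels within the new $\zeta$-reading. The new $a$-label sits on diagonal $\ell+1$ at $x=k-1-\ell$; since every occurrence of $\ell$ in $p(U)$ comes from a position $<k$ (by the hypothesis on $w_i$), this is the largest $x$-coordinate on diagonal $\ell+1$, and because $\ell$ is the maximum entry of $p(U')$, diagonal $\ell+1$ carries only $a$-labels and no higher diagonal carries any label. Hence the new $a$-label is the very last label of the new reading. A parallel analysis shows that the new $b$-label, at $x=k-\ell$ on diagonal $\ell$, is the rightmost $b$-label on that diagonal, and combined with the absence of $b$-labels on higher diagonals, it is the last $b$-label in the entire new reading.

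Finally, I would count the $a$-labels occurring after the new $b$-label in the new reading. Those on diagonal $\ell$ at $x>k-\ell$ correspond bijectively to positions $j>k$ of $p(U')$ with $w'_j=\ell-1$, contributing $B$ labels (the number of occurrences of $\ell-1$ after position $k$ in $p(U')$), while those on diagonal $\ell+1$ correspond to positions $j$ with $w'_j=\ell$, contributing $A+1$ labels (where $A$ is the number of occurrences of $\ell$ in $p(U)$). The total is $A+B+1=r+1$. Since the new $b$-label contributes the last up step of $\zeta(p(U'))$, this up step sits at $x=(n+1)-(r+1)=n-r$ with top $(n-r,n+1)$, and is followed by exactly $r+1$ right steps, forming a final peak. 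The labels of the new reading preceding the new $b$-label match, in order, those of the old reading up to its last $r$ letters (which are themselves all $a$-labels), so $\zeta(p(U'))$ agrees with $\zeta(p(U))$ up to the point $(n-r,n)$, from which point the inserted peak `$ab$' takes the path up and then right; this is precisely the assertion of the lemma. The main technical point requiring care is the verification---using the constraint $w_i<\ell$ for $i\geq k$---that no shifted old right step on diagonal $\ell$ yields a $b$-label past the new $b$.
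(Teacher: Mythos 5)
Your proof is correct and follows essentially the same route as the paper's: trace the diagonal-by-diagonal $\zeta$-reading, locate the new $a$- and $b$-labels coming from the inserted final maximal peak, verify the new $b$-label is the last up step of the output, and count the $r+1$ trailing $a$-labels (from diagonals $\ell$ and $\ell+1$) to pin down the position $(n-r,n+1)$. The main difference is one of rigor: you make explicit the step-by-step correspondence between $p(U)$ and $p(U')$, the preservation of $x$-order of old labels on each diagonal, and the absence of $b$-labels on diagonal $\ell$ past the new one, all of which the paper's proof treats more informally.
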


\begin{proof}
  By Lemma \ref{add}, we know that $p(U')$ is obtained from $p(U)$ by adding a final maximal peak. The height of this peak is $\ell+1$ since we added $\ell$ in the area sequence $p(U)$ to obtain this peak. The fact that this is the final maximal peak means that the peaks after are of heights smaller than $\ell+1$. 

  Let us recall that the map $\zeta$ builds a Dyck path by scanning along the lines of slope $1$ from bottom left to top right, by order of increasing height. Adding the peak of height $\ell+1$ does not change what
  we read on any height below $\ell$, nor what we read on height $\ell$ before
  we reach the right endpoint of the right step of the added peak. When we reach this right endpoint, in $\zeta(p(U'))$ we put an up step `$a$'. Then on the same height $\ell$ we read top endpoints of peaks of height $\ell$ appearing after the added peak (if any). Such peaks correspond to the letters $\ell-1$ appearing after the position of the added letter $\ell$ in $p(U')$, which will give right steps `$b$' in $\zeta(p(U'))$. Finally, we read the line of height $\ell+1$, putting a right step `$b$' in $\zeta(p(U'))$ for all maximal peaks of $p(U')$, which correspond to the occurrences of the letter $\ell$ in $p(U')$. Thus after the last up step `$a$' of $\zeta(p(U'))$ we have exactly $r+1$ right steps `$b$', so the added peak is in position $(n-r,n+1)$.
\end{proof}

\begin{lemma}
\label{lemmea}
The Dyck path $a(U')$ is obtained from $a(U)$ by adding a final peak in position $ (n-s,n+1)$, where $s$ is the number of intervals in $U'$ not comparable to 
the rightmost interval $I_{n+1}$. 
\end{lemma}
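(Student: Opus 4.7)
The plan is to apply the definition of $a$ directly, comparing the area sets of $U$ and $U'$. The key observation is that the order $\prec$ restricted to $\{1,\dots,n\}$ is unchanged in passing from $U$ to $U'$, so the boxes $(i,j)$ with $i<j\leq n$ appearing in $\tilde a(U')$ are exactly those appearing in $\tilde a(U)$. Hence $a(U')$ and $a(U)$ have identical area-sequence entries through row $n$, and only the new top row $j=n+1$ requires analysis.

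To analyze the top row, I would identify the new boxes $(i,n+1)\in\tilde a(U')$. Since $I_{n+1}$ is the rightmost interval of $U'$, no $i\leq n$ can satisfy $n+1\prec i$, so the intervals of $U'$ incomparable to $I_{n+1}$ are exactly $\{i\leq n : i\not\prec n+1\}$, which by assumption has cardinality $s$. By the second property in Lemma~\ref{defequivalent}, the set $\{i : i\prec n+1\}$ is downward-closed in $\{1,\dots,n\}$, hence an initial segment $\{1,2,\dots,n-s\}$. The new area boxes in the top row of $\tilde a(U')$ are therefore exactly $(n-s+1,n+1),(n-s+2,n+1),\dots,(n,n+1)$, so that the $(n+1)$-st entry of the area sequence of $a(U')$ equals $s$.

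Translating to Dyck paths, this means that the area sequence of $a(U')$ is $(a_1,\dots,a_n,s)$, where $(a_1,\dots,a_n)$ is the area sequence of $a(U)$. Consequently, the final up step of $a(U')$ goes from $(n-s,n)$ to $(n-s,n+1)$ and is followed by $s+1$ right steps to $(n+1,n+1)$. Since $a(U)$ ends at $(n,n)$, the two paths coincide up to the point where the path at height $n$ first reaches $x=n-s$; at that point, $a(U')$ inserts one extra up step immediately followed by one extra right step, with no further up steps thereafter. This insertion is precisely a peak with apex $(n-s,n+1)$, and as it is followed only by right steps it is the final peak of $a(U')$.

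The one slightly delicate step is the appeal to Lemma~\ref{defequivalent} that forces the indices comparable to $n+1$ to form a prefix; everything else is a routine translation between boxes, area sequences, and path coordinates.
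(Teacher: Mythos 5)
Your proof is correct and follows essentially the same route as the paper: compare area sets, note that only the top row $j=n+1$ changes, and use the fact that $\{i : i\prec n+1\}$ is an initial segment (you cite Lemma~\ref{defequivalent}; the paper phrases it via levels, saying the incomparable intervals are those of level $\ell$ plus a subset of those of level $\ell-1$) to conclude the new boxes are exactly $(n-s+1,n+1),\dots,(n,n+1)$. Your translation to path coordinates is a bit more spelled out than the paper's terse conclusion, but it is the same argument.
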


\begin{proof}
Let us recall that by definition $a(U)$ is the Dyck path whose area set is given by the boxes $(i,j)$ such that $i<j$ and $i\not\prec j$. We then add $I_{n+1}$, which is not comparable to the last $s$ intervals in $U$ (those of level $\ell$ together with a subset of those of level $\ell-1$). Thus in $U'$ we have only the new non comparable relations $i\not\prec n+1$ for $i\in\{n-s+1,\dots,n\}$, giving the corresponding boxes $(i,n+1)$ in the area set. This proves the lemma.
\end{proof}

We can now prove the main result of the paper.

\begin{Theorem} 
\label{theoremConj}
 The maps $\zeta \circ p$ and $a$ coincide. 
\end{Theorem}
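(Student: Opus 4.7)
The plan is to prove the theorem by induction on $n$. For the base case $n=1$, both $\U_1$ and $\D_1$ contain a single element, and both maps $\zeta\circ p$ and $a$ send it to the unique Dyck path of length $1$, so the equality holds trivially.

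For the inductive step, assume that $\zeta(p(U))=a(U)$ for a unit interval order $U\in\U_n$, and let $U'\in\U_{n+1}$ be obtained from $U$ by adjoining a new rightmost interval $I_{n+1}$, whose level I denote by $\ell$. I would combine Lemma \ref{deb} and Lemma \ref{lemmea}: the first tells me that $\zeta(p(U'))$ is obtained from $\zeta(p(U))$ by adding a final peak at position $(n-r,n+1)$, and the second tells me that $a(U')$ is obtained from $a(U)$ by adding a final peak at position $(n-s,n+1)$. By the induction hypothesis, $\zeta(p(U))=a(U)$, so it suffices to prove that $r=s$; then $\zeta(p(U'))=a(U')$ follows immediately.

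The main (and essentially only) obstacle is to check the equality $r=s$, and I expect this to be a direct but careful bookkeeping argument. On one hand, $s$ counts the intervals of $U$ not comparable to $I_{n+1}$. Since $I_{n+1}$ has level $\ell$, every interval of $U$ at level $\ell$ must overlap $I_{n+1}$ (otherwise $I_{n+1}$ would have level at least $\ell+1$), while an interval of level $\ell-1$ is incomparable to $I_{n+1}$ precisely when it overlaps $I_{n+1}$; intervals of level $\leq\ell-2$ or $\geq\ell+1$ contribute nothing (the former are strictly to the left, the latter do not exist in $U$). Hence $s$ equals the number of intervals of $U$ of level $\ell$ plus the number of intervals of $U$ of level $\ell-1$ overlapping $I_{n+1}$.

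On the other hand, $r$ is the number of letters $\ell$ in $p(U)$ plus the number of letters $\ell-1$ appearing after the newly inserted $\ell$ in $p(U')$. Using Proposition \ref{iso}, the letters $k$ in $p(U)$ correspond to the intervals of $U$ of level $k$, so the first summand matches the intervals of level $\ell$. For the second summand, Algorithm \ref{algo} inserts $\ell$ immediately after the $C_{n+1}$-th letter $\ell-1$ (and any $\ell$'s right after it), where $C_{n+1}$ counts the intervals of level $\ell-1$ strictly to the left of $I_{n+1}$; therefore the letters $\ell-1$ occurring after the inserted $\ell$ are exactly the remaining letters $\ell-1$, namely those corresponding to intervals of level $\ell-1$ that overlap $I_{n+1}$. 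This matches the second summand of $s$, giving $r=s$ and completing the induction.
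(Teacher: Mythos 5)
Your proposal is correct and follows essentially the same route as the paper: induct on $n$, invoke Lemma \ref{deb} and Lemma \ref{lemmea} to reduce to checking $r=s$, and verify that equality by a counting argument. Your treatment of $r=s$ is a bit more detailed than the paper's (which just asserts the correspondence in one sentence); you make explicit why no interval of level $\geq \ell+1$ exists in $U$, why every interval of level $\ell$ must overlap $I_{n+1}$, and why the intervals of level $\leq \ell-2$ are all strictly to the left of $I_{n+1}$ (the last of these tacitly using that $\ell$ is weakly increasing), and you trace the count $r$ back through Algorithm \ref{algo} via $C_{n+1}$. One small citation quibble: the fact that the multiset of letters in $p(U)$ is $\{\ell(1),\dots,\ell(n)\}$ follows directly from Algorithm \ref{algo} (each step inserts the letter $\ell(i)$), not really from Proposition \ref{iso}; the substance of your argument is nevertheless sound.
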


\begin{proof}

We proceed by induction. We know that $a(U)=\zeta \circ p(U)$ is true for the unique $U\in \U_1$, thus the initial case is verified.

Let $n\geq 1$. Suppose that $a(U)=\zeta \circ p(U)$ is true for all $U\in \U_n$. Let $U'\in \U_{n+1}$. There exists a unit interval order poset $U\in \U_n$ such that $U'$ is obtained from $U$ by adding an $(n+1)$-st interval $I_{n+1}$ of level $\ell$ to the right of those
  of $U$. 
  
   We now establish that the number $r$ in Lemma \ref{deb} is equal to the number $s$ in Lemma \ref{lemmea}. Indeed, in $U'$ the intervals not comparable to $I_{n+1}$ are all the intervals of level $\ell$ in $U$ and a subset of those of level $\ell-1$. The latter are exactly those which correspond to the occurrences of the letter $\ell-1$ appearing after the position of the added $\ell$ in $p(U')$.
  
  Then using Lemma \ref{deb} and Lemma \ref{lemmea}, and the induction hypothesis that gives $a(U) = \zeta(p(U))$, we obtain that $a(U') = \zeta(p(U'))$, thereby finishing the induction and the proof of the theorem.
\end{proof}

\section{Graded reverse lexicographic minimality of $\tilde p(U)$} \label{grevlex}

We define the graded reverse lexicographic order on finite sequences of
$n$ non-negative integers as follows. We say that $(a_1,\dots,a_n)<(b_1,\dots,b_n)$ if $\sum_i a_i<\sum_i b_i$, or in the case that the sums are equal if $a_j>b_j$, where $j$ is the index of the first position where the two strings differ.
(Note that the inequality $a_j>b_j$ is reversed from what one might expect!
The expected inequality, $a_j<b_j$, defines graded lexicographic order.)

\begin{lemma} \label{grevlex-lemma} Let $U$ be a unit interval order.
  The graded reverse lexicographically minimal part listing for $U$ is the area
  sequence
  of a Dyck path. \end{lemma}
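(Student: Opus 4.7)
The strategy is to show that any graded reverse lexicographic minimum part listing $w$ for $U$ automatically satisfies $w_1=0$ and $w_i\leq w_{i-1}+1$ for every $i\geq 2$, which is exactly the condition for $w$ to be the area sequence of a Dyck path.

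First I would establish the sum bound $\sum_i w_i \geq \sum_i \ell(i)$, with equality if and only if $w_i=\ell_{P(w)}(i)$ for every $i$, where $\ell_{P(w)}$ denotes the level function of $P(w)$. Both clauses in the definition of $\prec_{P(w)}$ force $w$ to increase by at least $1$ along every chain of $P(w)$, so $w_i\geq \ell_{P(w)}(i)$ pointwise. Summing and invoking the isomorphism $P(w)\cong U$ (which preserves the multiset of level values) yields the bound, and it is attained by $\tilde p(U)$, which by Algorithm~\ref{algo} is a rearrangement of the multiset $\{\ell(i)\}$. Consequently any grevlex-minimum $w$ is sum-minimum, so $w_i=\ell_{P(w)}(i)$ for every $i$.

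From this I would deduce $w_1=0$: if $w_1\geq 1$, then every $j$ with $j\prec_{P(w)}1$ must satisfy $w_1-w_j\geq 2$ (the second clause of the definition of $\prec_{P(w)}$ is vacuous since no $j<1$), and the pointwise bound $\ell_{P(w)}(j)\leq w_j$ then forces $\ell_{P(w)}(1)\leq (w_1-2)+1<w_1$, contradicting $w_1=\ell_{P(w)}(1)$.

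Finally, to rule out jumps I would use an exchange argument. Suppose $w$ is grevlex-minimum but $w_i\geq w_{i-1}+2$ for some $i$, and let $w'$ be obtained from $w$ by swapping the entries at positions $i-1$ and $i$. The transposition exchanging $i-1$ and $i$ (fixing everything else) is then a poset isomorphism $P(w)\to P(w')$; this verification is the main technical point of the argument and reduces to the observation that for any $k\notin\{i-1,i\}$ the inequalities ``$k<i-1$'' and ``$k<i$'' are equivalent, so the relations between $\{i-1,i\}$ and $k$ transport along the swap, while the relation between $i-1$ and $i$ themselves reverses in a manner compatible with the hypothesis $w_i-w_{i-1}\geq 2$. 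Hence $w'$ is again a part listing for $U$, and $w'_{i-1}=w_i>w_{i-1}$ makes $w'$ strictly smaller than $w$ in graded reverse lexicographic order, contradicting minimality. Therefore $w_i\leq w_{i-1}+1$ for all $i$, and $w$ is the area sequence of a Dyck path.
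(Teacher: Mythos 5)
Your proof is correct, and it is partly the same and partly genuinely different from the paper's. The exchange step ruling out jumps $w_i \geq w_{i-1}+2$ is identical to the paper's first step (swap positions $i-1$ and $i$; you also verify the isomorphism, which the paper asserts without comment). Where you diverge is the argument that $w_1=0$. The paper handles this with a one-move operation: delete $a_1$ from the front and append $a_1-1$ at the end, then check (nontrivially, though the paper only asserts it) that this yields an isomorphic poset with strictly smaller sum. You instead prove a sharper structural fact: for any part listing $w$ with $P(w)\cong U$, the pointwise bound $w_i \ge L_{P(w)}(i)$ (longest-chain length ending at $i$) gives $\sum_i w_i \ge \sum_i \ell(i)$, this bound is attained by $\tilde p(U)$ since Algorithm~\ref{algo} outputs a permutation of the multiset $\{\ell(i)\}$, so a grevlex minimum is sum-minimum and hence satisfies $w_i = L_{P(w)}(i)$ pointwise; evaluating at $i=1$ (where the $j<1$ clause is vacuous, forcing any lower neighbour to have $w_j \le w_1-2$) yields $w_1=0$. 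Your route requires a bit more setup but buys the stronger byproduct $w_i=L_{P(w)}(i)$ for all $i$, and avoids having to verify the somewhat delicate ``rotate $a_1$ to the back as $a_1-1$'' isomorphism; the paper's route is shorter once that isomorphism is granted. One small notational caveat: you write $\ell_{P(w)}$ as though the paper's recursive definition of $\level$ applies directly to $P(w)$, but that recursion is phrased for naturally labelled posets; what you actually use (and what makes the argument correct) is the longest-chain characterization, which is labelling-independent and satisfies the same recurrence. Stating it that way removes any ambiguity.
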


\begin{proof} Let $(a_1,\dots, a_n)$ be the graded reverse lexicographically minimal part
  listing for $U$. Suppose, seeking a contradiction, that $a_i>a_{i-1}+1$.
  Consider the part listing in which $a_i$ and $a_{i-1}$ have swapped
  positions. This part listing defines an isomorphic poset, and the new
  part listing is lower in graded reverse lexicographic order, so we would
  have preferred it. This is a contradiction.

  Now suppose that $a_1>0$. Consider the part listing in which $a_1$ is
  removed and $a_1-1$ is inserted at the end. This part listing produces
  an isomorphic poset, and since its sum is lower, it is lower in graded
  reverse lexicographic order, so we would have preferred it. This, too,
  is a contradiction.

  It follows that the graded reverse lexicographically minimal part listing
  for $U$ is the area sequence of a Dyck path.
  \end{proof}

\begin{corollary} For $U$ a unit interval order, the part listing $\tilde p(U)$ is the graded reverse lexicographically minimal part listing for $U$.
\end{corollary}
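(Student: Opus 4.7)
The plan is to combine Lemma \ref{grevlex-lemma} with the uniqueness statement already established for $\tilde p(U)$. Concretely, Lemma \ref{grevlex-lemma} tells us that the graded reverse lexicographically minimal part listing $w$ with $P(w)\cong U$ is the area sequence of a Dyck path in $\D_n$. On the other hand, the proposition in Section \ref{PL} showing $\tilde p(U)=q(U)$ proves that there is in fact a \emph{unique} such part listing for $U$, namely $\tilde p(U)$. So the two must coincide.

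Thus the plan is essentially a single line: first I would note that a grevlex-minimal part listing for $U$ exists (it is an element of a nonempty finite collection in a total order), then apply Lemma \ref{grevlex-lemma} to see that this minimizer lies in the set of part listings whose underlying word is an area sequence of a Dyck path. Then I would invoke the uniqueness of $\tilde p(U)$ among part listings for $U$ whose word is an area sequence to conclude equality.

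There is no real obstacle here; all of the work has already been done in Lemma \ref{grevlex-lemma} (which forces a grevlex-minimal part listing to satisfy the two defining conditions $a_1=0$ and $a_i\leq a_{i-1}+1$ by exhibiting a strictly smaller part listing whenever either condition fails) and in the earlier uniqueness proposition (which reduces uniqueness to comparing cardinalities of $\U_n$ and $\D_n$). The corollary is simply the conjunction of these two facts. If anything deserves a comment, it is just to remark that the argument gives as a byproduct an alternative, order-theoretic description of the map $\tilde p$ that does not require running Algorithm \ref{algo}.
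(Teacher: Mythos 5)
Your proof is correct and follows essentially the same route as the paper: apply Lemma \ref{grevlex-lemma} to see that the grevlex-minimal part listing for $U$ is the area sequence of a Dyck path, then invoke the uniqueness of such a part listing, which follows from the bijectivity of $\tilde p$. One small imprecision in your aside: the set of part listings for $U$ is infinite (adding a constant to every entry preserves the isomorphism type), so the existence of a grevlex minimum comes from the graded structure of the order, not from finiteness; this does not affect the argument.
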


\begin{proof} By Lemma \ref{grevlex-lemma}, the graded reverse lexicographically
  minimal part listing for $U$ is the area sequence of some Dyck path.
  We know that $\tilde p$ defines a bijection from unit interval orders to area
  sequences of Dyck paths. Thus, there is at most one area sequence of a Dyck
  path which, when interpreted as a part listing, yields a poset isomorphic
  to $U$. The
  graded reverse lexicographically minimal part listing for $U$ is
  therefore $\tilde p(U)$.
  \end{proof}

\subsection*{Acknowledgements}

  The authors would like to thank Mathieu Guay-Paquet, Alejandro Morales,
  and Viviane Pons
  for helpful discussions.

The authors benefitted from the support of the NSERC  Discover Grants program and the Canada Research Chairs program. F.G. was supported by an NSERC USRA.

\end{document}